\newtheorem{thm}{Theorem}[section]
\theoremstyle{definition}
\theoremstyle{remark}
\newtheorem{rem}[thm]{Remark}
\numberwithin{equation}{section}
\begin{document}
\title[Stability of a functional equation]
{Stability of the parametric fundamental equation
of information for nonpositive parameters}

\author{Eszter Gselmann}
\address{
Institute of Mathematics\\
University of Debrecen\\
P. O. Box: 12.\\
Debrecen\\
Hungary\\
H--4010}
\email{gselmann@math.klte.hu}

\author{Gyula Maksa}
\address{Institute of Mathematics\\
University of Debrecen\\
P. O. Box: 12.\\
Debrecen\\
Hungary\\
H--4010}
\email{maksa@math.klte.hu}

\begin{abstract}
In this note we prove that the parametric fundamental equation
of information is stable in the sense of Hyers and Ulam
provided that the parameter is nonpositive.
We also prove, as a corollary, that the system of equations that
defines the recursive and semi-symmetric information measures
depending on a nonpositive parameter is stable in a certain sense.
\end{abstract}

\thanks{This research has been supported by the Hungarian Scientific Research Fund
(OTKA) Grants NK 68040 and K 62316.}
\subjclass{39B82, 39B72}
\keywords{Stability, fundamental equation of information, entropy of degree $\alpha$.}
\maketitle

\section{Introduction}

The basic problem in the stability theory of functional
equations is whether an approximate solution of a
functional equation or a system of functional equations
can be approximated by a solution of the equation or the
system of equations in question.

In this paper we prove that the parametric fundamental equation of information
\begin{equation}\label{Eq1.1}
f(x)+(1-x)^{\alpha}f\left(\frac{y}{1-x}\right)=
f(y)+(1-y)^{\alpha}f\left(\frac{x}{1-y}\right)
\end{equation}
is stable in the sense of Hyers and Ulam (see the expository papers
Forti \cite{For95}, Ger \cite{Ger94}, Moszner \cite{Mos04}),
provided that $\alpha$ is nonpositive.
Equation (\ref{Eq1.1}) arises in a natural way in
characterizing information measures based on the properties of
$\alpha$--recursivity  and semi--symmetry (see Acz\'{e}l--Dar\'{o}czy \cite{AD75}).
In the investigations (\ref{Eq1.1}) is supposed to hold on
\[
D=\left\{(x, y)\in \mathbb{R}^{2} \vert x, y \in [0,1[, x+y\leq 1\right\}
\]
with $f:[0,1]\rightarrow\mathbb{R}$ or only on the interior of $D$,
\[
D^{\circ}=\left\{(x, y)\in\mathbb{R}^{2}\vert x, y, x+y\in ]0,1[\right\},
\]
with $f:]0,1[\rightarrow\mathbb{R}$ (see Acz\'{e}l--Dar\'{o}czy \cite{AD75},
Acz\'{e}l \cite{Acz81}, \cite{Acz86} and their references).
In \cite{Mak08} we proved that (\ref{Eq1.1}) is stable on $D$,
moreover it is superstable (see Forti \cite{For95}) if $\alpha>0$
and $\alpha\neq 1$.
The question of the stability of (\ref{Eq1.1}) in the exceptional case
$\alpha=1$ on $D^{\circ}$ was raised by Sz\'{e}kelyhidi \cite{Szek91}.
The method we used in \cite{Mak08} can not be applied
neither in this case nor in the case $\alpha\leq 0$
neither on $D$ nor on $D^{\circ}$.
Finally, we should remark that the ideas we use in this
paper to prove the stability of (\ref{Eq1.1}) on $D^{\circ}$
as well as on $D$ do not work if $\alpha>0$.

\section{The main result}

First we prove the following.

\begin{thm}\label{Thm2.1}
Let $\alpha, \varepsilon \in\mathbb{R}$ be fixed, $\alpha\leq 0$ and
$\varepsilon \geq 0$.
Suppose that the function $f:]0,1[\rightarrow\mathbb{R}$ satisfies
the inequality
\begin{equation}\label{Eq2.1}
\left|f(x)+(1-x)^{\alpha}f\left(\frac{y}{1-x}\right)
-f(y)-(1-y)^{\alpha}f\left(\frac{x}{1-y}\right)\right| \leq \varepsilon
\end{equation}
for all $(x, y)\in D^{\circ}$.
Then, in case $\alpha<0$ there exist $a, b\in\mathbb{R}$ such that
\begin{equation}\label{Eq2.2}
\left|f(x)-\left[ax^{\alpha}+b(1-x)^{\alpha}-b\right]\right| \leq 15 \varepsilon,
\quad \left(x\in ]0,1[\right)
\end{equation}
furthermore, in case $\alpha=0$, there exists a logarithmic function
$l: ]0,1[\rightarrow\mathbb{R}$ and $c\in \mathbb{R}$ such that
\begin{equation}\label{Eq2.3}
\left|f(x)-\left[l(1-x)+c\right]\right|\leq 63 \varepsilon.
\quad \left(x\in ]0,1[\right)
\end{equation}
\end{thm}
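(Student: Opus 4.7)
The plan is to reduce the two-variable approximate equation (2.1) on $D^\circ$ to a one-variable approximate functional equation of standard type---logarithmic when $\alpha=0$, Cauchy-like when $\alpha<0$---apply its Hyers--Ulam stability, and then translate back to $f$. The two sub-cases require separate reductions because the two-parameter family of particular solutions of (1.1) spanned by $x\mapsto x^\alpha$ and $x\mapsto(1-x)^\alpha-1$ degenerates at $\alpha=0$ and is replaced there by logarithmic solutions. Let $E(x,y)$ denote the expression under the absolute value in (2.1); then $|E(x,y)|\le\varepsilon$ on $D^\circ$ and $E(x,y)=-E(y,x)$, a useful antisymmetry.

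For $\alpha=0$ I would set $g(x):=f(1-x)$ and use the change of variables $u=1-x$, $v=1-y$, $s=u+v-1$, under which (2.1) takes the form
\[
\bigl|\,g(u)+g(s/u)-g(v)-g(s/v)\,\bigr|\le\varepsilon\qquad\bigl(u,v\in(0,1),\ u+v>1\bigr).
\]
The motivation for this rewriting is that for exact logarithmic solutions $g(x)=l(x)+c$ one has $g(u)+g(s/u)=l(s)+2c$, independent of $u$; the inequality above thus expresses that $g(u)+g(s/u)$ is approximately a function of $s$ alone. By chaining a bounded number of instances of the inequality at judiciously chosen pairs of arguments---so as to move between arbitrary points on each level set $\{pq=s\}$ in the product space---I would extract an approximate logarithmic equation $|L(pq)-L(p)-L(q)|\le C\varepsilon$ for a function $L$ built from $g$ after subtracting a normalising constant. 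Classical Hyers--Ulam stability for the logarithmic equation then supplies a genuine logarithmic $l$ with $|L-l|\le C'\varepsilon$, and retracing the substitutions yields (2.3), the constant $63$ arising as the cumulative triangle-inequality count.

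For $\alpha<0$ the logarithmic change of variable is unavailable. I would instead exploit that $x\mapsto x^\alpha$ and $x\mapsto(1-x)^\alpha-1$ are exact solutions of (1.1) to extract, from the values of $f$ at two carefully chosen base points, constants $a$ and $b$ such that the residual $F(x):=f(x)-[ax^\alpha+b(1-x)^\alpha-b]$ satisfies (2.1) with the same $\varepsilon$ and in addition vanishes at those base points. Viewing (2.1) with one variable frozen as an approximate recursion expressing $F(y/(1-x))$ in terms of $F(x)$, $F(y)$, $F(x/(1-y))$ up to an $\varepsilon$-error, a controlled iteration anchored by the vanishing at the base points then propagates the $\varepsilon$-bound to all of $(0,1)$, giving (2.2) with the constant $15$.

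The main obstacle in both cases is this reduction step itself. For $\alpha=0$, passing from the pairwise approximate identity above to a genuine approximate logarithmic equation valid on all of $(0,1)^2$ requires covering that square by admissible pairs via overlapping chains (noting that a single substitution only connects $u$ to its reflection $s+1-u$, so several different choices of $s$ must be cycled through), and one must ensure that the auxiliary $L$ is well-defined and bounded on a large enough subset. For $\alpha<0$, the two parameters $a,b$ must be chosen so that both components of the residual---the $x^\alpha$-direction and the $(1-x)^\alpha$-direction---are simultaneously controlled; otherwise iteration of the recursion would amplify errors rather than keep them $O(\varepsilon)$. The explicit constants $63$ and $15$ in the theorem record essentially the minimum number of triangle-inequality applications used in these two reductions.
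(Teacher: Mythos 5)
Your proposal is a plan rather than a proof, and both halves have genuine gaps at exactly the points where the real work lies. In the case $\alpha<0$, the step ``a controlled iteration anchored by the vanishing at the base points then propagates the $\varepsilon$-bound to all of $]0,1[$'' is not a mechanism: solving (\ref{Eq2.1}) for one value of the residual expresses it through \emph{three} other values plus an $O(\varepsilon)$ error, so vanishing at two base points does not obviously control anything, and you give no description of which configurations are iterated, why their orbit covers $]0,1[$, or why the error does not accumulate. The paper's route is entirely different and exploits the sign of $\alpha$ in an essential way: it extends $f$ to the $\alpha$-homogeneous function $F(u,v)=(u+v)^{\alpha}f\left(\frac{v}{u+v}\right)$ on $]0,+\infty[^{2}$, forms $g(u)=F(u,1)-F(1,u)$ and $G(u,v)=F(u,v)+g(v)$, proves $|G(u,v)-G(v,u)|\leq 3\varepsilon(u+v+1)^{\alpha}$, and then rescales by $t$ and lets $t\to 0$; since $\alpha<0$ the error bound $3\varepsilon(u+v+t^{-1})^{\alpha}$ tends to $0$, so $g$ is determined \emph{exactly} as $g(u)=c(u^{\alpha}-1)$ (a superstability phenomenon), after which an explicit algebraic identity in $p,q$ with $q=\frac12$ yields (\ref{Eq2.2}). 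Nothing in your sketch replaces this limit argument, and it is the heart of the proof.

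In the case $\alpha=0$ your change of variables is correct and the inequality $|g(u)+g(s/u)-g(v)-g(s/v)|\leq\varepsilon$ does hold, but the passage from it to a genuine approximate logarithmic equation is precisely what needs proving, and ``chaining a bounded number of instances at judiciously chosen pairs'' is not carried out: a single instance only relates $u$ to $1+s-u$ on the level set $pq=s$, the admissible pairs are constrained by $u+v>1$, and you neither exhibit the chains nor show the resulting constant is controlled. The paper avoids this combinatorial difficulty by again working with the homogeneous extension, where $G(tu,tv)-G(u,v)=g(tv)-g(v)$ gives $|g(tv)-g(t)-g(v)|\leq 6\varepsilon$ on all of $]0,+\infty[^{2}$ in one line, and the classical stability of the logarithmic equation then applies. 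Finally, the claim that $15$ and $63$ ``record the minimum number of triangle-inequality applications'' is unsupported; in the paper $15$ arises as $\sup_{\alpha<0}\frac{8+6\cdot 2^{\alpha}+2^{-\alpha}}{2^{1-\alpha}-1}$, not as a count of steps. As written, neither case of your argument can be completed without supplying ideas that are absent from the proposal.
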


\begin{proof}
Define the function $F$ on $]0, +\infty[^{2}$ by
\begin{equation}\label{Eq2.4}
F(u, v)=(u+v)^{\alpha}f\left(\frac{v}{u+v}\right).
\end{equation}
Then
\begin{equation}\label{Eq2.5}
F(tu, tv)=t^{\alpha}F(u, v) \quad \left(t, u, v \in ]0, +\infty[\right)
\end{equation}
and
\begin{equation}\label{Eq2.6}
f(x)=F(1-x, x), \quad \left(x\in ]0,1[\right)
\end{equation}
furthermore, with the substitutions
\[
x=\frac{w}{u+v+w}, \quad y=\frac{v}{u+v+w} \quad \left(u, v, w\in ]0, +\infty[\right)
\]
inequality (\ref{Eq2.1}) implies that
\[
\begin{array}{l}
\left|f\left(\frac{w}{u+v+w}\right)+\frac{(u+v)^{\alpha}}{(u+v+w)^{\alpha}}f\left(\frac{v}{u+v}\right)\right. \\
\left. -f\left(\frac{v}{u+v+w}\right)-\frac{(u+w)^{\alpha}}{(u+v+w)^{\alpha}}f\left(\frac{w}{u+w}\right)\right|
\leq \varepsilon
\end{array}
\]
whence, by (\ref{Eq2.4})
\begin{equation}\label{Eq2.7}
\left|F(u+v, w)+F(u, v)-F(u+w, v)-F(u, w)\right|\leq \varepsilon (u+v+w)^{\alpha}
\end{equation}
follows for all $u, v, w\in ]0, +\infty[$.

In the next step we define the functions $g$ and $G$ on
$]0, +\infty[$ and on $]0, +\infty[^{2}$, respectively by
\begin{equation}\label{Eq2.8}
g(u)=F(u, 1)-F(1, u)
\end{equation}
and
\begin{equation}\label{Eq2.9}
G(u, v)=F(u, v)+g(v).
\end{equation}
We will show that
\begin{equation}\label{Eq2.10}
\left|G(u, v)-G(v, u)\right|\leq 3\varepsilon (u+v+1)^{\alpha}.
\quad \left(u, v\in ]0, +\infty[\right)
\end{equation}
Indeed, with the substitution $w=1$, inequality (\ref{Eq2.7})
implies that
\begin{equation}\label{Eq2.11}
\left|F(u+v, 1)+F(u, v)-F(u+1, v)-F(u, 1)\right|\leq \varepsilon (u+v+1)^{\alpha}.
\end{equation}
Interchanging $u$ and $v$, it follows from (\ref{Eq2.11}) that
\[
\left|-F(u+v, 1)-F(v, u)+F(v+1, u)-F(v, 1)\right|\leq \varepsilon (u+v+1)^{\alpha}.
\quad \left(u, v\in ]0, +\infty[\right)
\]
This inequality, together with (\ref{Eq2.11}) and the triangle inequality
imply that
\begin{equation}\label{Eq2.12}
\left|F(u, v)-F(v, u)-F(u+1, v)-F(u, 1)+F(v+1, u)+F(v, 1)\right|\leq 2\varepsilon (u+v+1)^{\alpha}
\end{equation}
holds for all $u, v\in ]0, +\infty[$.
On the other hand, with $u=1$, we get from (\ref{Eq2.7}) that
\[
\left|F(1+v, w)+F(1, v)-F(1+w, v)-F(1, w)\right|\leq \varepsilon(1+v+w)^{\alpha}.
\]
Replacing here $v$ by $u$ and $w$ by $v$, respectively, we have that
\[
\left|F(u+1, v)+F(1, u)-F(v+1, u)-F(1, v)\right|\leq \varepsilon (u+v+1)^{\alpha}.
\quad \left(u, v\in ]0, +\infty[\right)
\]
Again, the triangle inequality and the definitions
(\ref{Eq2.8}) and (\ref{Eq2.9}), (\ref{Eq2.12}) imply (\ref{Eq2.10}).

In what follows we will investigate the function $g$.
At this point of the proof we have to distinguish two cases.

In case $\alpha<0$ we will determine the function $g$ by proving that
\begin{equation}\label{Eq2.13}
g(u)=c(u^{\alpha}-1) \quad \left(u\in ]0, +\infty[\right)
\end{equation}
with some $c\in \mathbb{R}$.
Indeed, (\ref{Eq2.10}) implies that
\[
\left|G(tu, tv)-G(tv, tu)\right|\leq 3\varepsilon (tu+tv+1)^{\alpha},
\quad \left(t, u, v\in ]0, +\infty[\right)
\]
therefore by (\ref{Eq2.5}) and (\ref{Eq2.9})
\[
\left|t^{\alpha}F(u, v)+g(tv)-t^{\alpha}F(v, u)-g(tu)\right| \leq
3\varepsilon (tu+tv +1)^{\alpha}
\quad \left(t, u, v\in ]0, +\infty[\right)
\]
or
\[
\left|F(u, v)-F(v, u)-t^{-\alpha}\left(g(tu)-g(tv)\right)\right|
\leq 3\varepsilon (u+v+t^{-1})^{\alpha}
\quad \left(t, u, v\in ]0, +\infty[\right)
\]
whence
\[
\lim_{t\rightarrow 0}t^{-\alpha}\left(g(tu)-g(tv)\right)=F(u, v)-F(v, u)
\quad \left(t, u, v\in ]0, +\infty[\right)
\]
follows.
Particularly, with $v=1$, by (\ref{Eq2.8}), we have that
\begin{equation}\label{Eq2.14}
g(u)=\lim_{t\rightarrow 0}t^{-\alpha}\left(g(tu)-g(t)\right).
\quad \left(u\in ]0, +\infty[\right)
\end{equation}
Let now $u, v\in ]0, +\infty[$. Then, by (\ref{Eq2.14}), we obtain that
\[
\begin{array}{rcl}
g(uv)&=& \lim_{t\rightarrow 0}t^{-\alpha}\left[g(tuv)-g(t)\right] \\
 &=&\lim_{t\rightarrow 0}\left[(tv)^{-\alpha}\left(g((tv)u)-g(tv)\right)v^{\alpha}+
 t^{-\alpha}(g(tv)-g(t))\right] \\
  &=&g(u)v^{\alpha}+g(v).
\end{array}
\]
Therefore, $g(u)v^{\alpha}+g(v)=g(v)u^{\alpha}+g(u)$, that is,
\[
g(u)\left(v^{\alpha}-1\right)=g(v)\left(u^{\alpha}-1\right)
\quad \left(u, v\in ]0, +\infty[\right)
\]
which implies (\ref{Eq2.13}) with $c=g(2)\left(2^{\alpha}-1\right)^{-1}$.

Thus, by (\ref{Eq2.6}), (\ref{Eq2.13}), (\ref{Eq2.9}) and (\ref{Eq2.10}), we have that
\begin{equation}\label{Eq2.15}
\begin{array}{l}
\left|f(x)-c(1-x)^{\alpha}-\left(f(1-x)-cx^{\alpha}\right)\right| \\
=\left|F(1-x, x)+cx^{\alpha}-\left(F(x, 1-x)+c(1-x)^{\alpha}\right)\right| \\
=\left|G(1-x, x)-G(x, 1-x)\right|\leq 3\cdot 2^{\alpha}\varepsilon
\end{array}
\end{equation}
holds for all $x\in ]0, 1[$.

In the next step we define the functions $f_{0}$ and
$F_{0}$ on $]0, 1[$ and on $]0, 1[^{2}$ by
\begin{equation}\label{Eq2.16}
f_{0}(x)=f(x)-c\left[(1-x)^{\alpha}-1\right]
\end{equation}
and
\begin{equation}\label{Eq2.17}
F_{0}(p, q)=f_{0}(p)+p^{\alpha}f_{0}(q)-f_{0}(pq)-
(1-pq)^{\alpha}f_{0}\left(\frac{1-p}{1-pq}\right),
\end{equation}
respectively.
Then (\ref{Eq2.1}) and (\ref{Eq2.15}) imply that
\begin{equation}\label{Eq2.18}
\left|f_{0}(x)+(1-x)^{\alpha}f_{0}\left(\frac{y}{1-x}\right)
-f_{0}(y)-(1-y)^{\alpha}f_{0}\left(\frac{x}{1-y}\right)\right|\leq \varepsilon
\end{equation}
for all $(x, y)\in D^{\circ}$ and
\begin{equation}\label{Eq2.19}
\left|f_{0}(x)-f_{0}(1-x)\right|\leq 3\cdot 2^{\alpha}\varepsilon.
\quad \left(x\in ]0, 1[\right)
\end{equation}
Furthermore, with the substitutions $x=1-p$, $y=pq$ ($p, q\in ]0, 1[$), (\ref{Eq2.18})
implies that
\begin{equation}\label{Eq2.20}
\left|f_{0}(1-p)+p^{\alpha}f_{0}(q)-
f_{0}(pq)-(1-pq)^{\alpha}f_{0}\left(\frac{1-p}{1-pq}\right)\right|\leq\varepsilon
\end{equation}
holds for all $p, q\in ]0, 1[$. Therefore, due to (\ref{Eq2.19}) and the
triangle inequality, (\ref{Eq2.18}) implies that
\begin{equation}\label{Eq2.21}
\left|F_{0}(p, q)\right|\leq \left(1+ 3\cdot 2^{\alpha}\right)\varepsilon.
\quad \left(p, q\in ]0, 1[\right)
\end{equation}
It can easily be checked that
\[
\begin{array}{l}
f_{0}(p)\left[q^{\alpha}+(1-q)^{\alpha}-1\right]-f_{0}(q)\left[p^{\alpha}+(1-p)^{\alpha}-1\right] \\
=F_{0}(q, p)-F_{0}(p, q)+(1-pq)^{\alpha}
\left[F_{0}\left(\frac{1-q}{1-pq}, p\right)+
f_{0}\left(1-\frac{1-p}{1-pq}\right)
-f_{0}\left(\frac{1-p}{1-pq}\right)\right]
\end{array}
\]
holds for all $p, q\in ]0, 1[$.
Thus, by (\ref{Eq2.21}) and (\ref{Eq2.19}) we get that
\[
\begin{array}{l}
\left|f_{0}(p)\left[q^{\alpha}+(1-q)^{\alpha}-1\right]-f_{0}(q)\left[p^{\alpha}+(1-p)^{\alpha}-1\right]\right| \\
\leq 2(1+3\cdot 2^{\alpha})\varepsilon
+(1-pq)^{\alpha}\left[(1+3\cdot2^{\alpha})\varepsilon+3\cdot 2^{\alpha}\varepsilon\right],
\end{array}
\]
that is,
\[
\begin{array}{l}
\left|f_{0}(p)-\frac{f_{0}(q)}{q^{\alpha}+(1-q)^{\alpha}-1}
\left[p^{\alpha}+(1-p)^{\alpha}-1\right]\right| \\
\leq
\frac{2(1+3\cdot 2^{\alpha})+(1-pq)^{\alpha}(1+6\cdot 2^{\alpha})}{q^{\alpha}+(1-q)^{\alpha}-1}\varepsilon.
\end{array}
\quad \left(p, q\in ]0, 1[\right)
\]
Taking into consideration (\ref{Eq2.16}), with $q=\frac{1}{2}$ with the definitions
$a=f_{0}\left(\frac{1}{2}\right)\left(2^{1-\alpha}-1\right)^{-1}$, $b=a+c$,
this inequality implies that
\[
\left|f(x)-\left[ax^{\alpha}+b(1-x)^{\alpha}-b\right]\right| \leq
\frac{8+6\cdot 2^{\alpha}+2^{-\alpha}}{2^{1-\alpha}-1}\varepsilon.
\quad  \left(x\in ]0, 1[\right)
\]
Since
\[
\sup_{\alpha <0}\frac{8+6\cdot 2^{\alpha}+2^{-\alpha}}{2^{1-\alpha}-1}=15
\]
we get (\ref{Eq2.2}).

In case $\alpha=0$ we will show that there exists a logarithmic function
$l:]0, +\infty[\rightarrow\mathbb{R}$ such that
\[
\left|g(u)-l(u)\right|\leq 6\varepsilon
\]
for all $u\in ]0, +\infty[$.
Indeed, (\ref{Eq2.10}) yields in this case that
\[
\left|G(u, v)-G(v, u)\right|\leq 3\varepsilon.
\quad \left(u, v\in ]0, +\infty[\right)
\]
Due to (\ref{Eq2.5}) and (\ref{Eq2.9}) we obtain that
\[
\begin{array}{rcl}
G(tu, tv)&=&F(tu, tv)+g(tv)\\
&=&F(u, v)+g(tv)\\
&=&G(u, v)-g(v)+g(tv)
\end{array}
\]
that is,
\[
G(tu, tv)-G(u, v)=g(tv)-g(v),  \quad \left(t, u, v\in ]0, +\infty[\right)
\]
therefore
\begin{equation}\label{Eq2.22}
\begin{array}{l}
\left|g(tv)-g(v)+g(u)-g(tu)\right| \\
=\left|G(tu, tv)-G(u, v)-G(tv, tu)+G(v, u)\right| \\
\leq \left|G(tu, tv)-G(tv, tu)\right|+\left|G(v, u)-G(u, v)\right|
\leq 6\varepsilon
\end{array}
\end{equation}
for all $t, u, v\in ]0, +\infty[$. Now (\ref{Eq2.22}) with the substitution
$u=1$ implies that
\[
\left|g(tv)-g(v)-g(t)\right|\leq 6\varepsilon
\]
holds for all $t, v\in ]0, +\infty[$, since obviously $g(1)=0$.
This means that the function $g$ is approximately logarithmic on
$]0, +\infty[$. Thus (see e.g. Forti \cite{For95}) there exists
a logarithmic function $l:]0, +\infty[\rightarrow\mathbb{R}$ such that
\[
\left|g(u)-l(u)\right|\leq 6\varepsilon
\]
holds for all $u\in ]0, +\infty[$.

Furthermore,
\begin{equation}\label{Eq2.23}
\begin{array}{l}
\left|f(x)-l(1-x)-\left(f(1-x)-l(x)\right)\right| \\
=\left|F(1-x, x)-l(1-x)-F(x, 1-x)+l(x)\right| \\
=\left|F(1-x, x)+g(x)-g(x)-l(1-x)\right. \\
-F(x, 1-x)+g(1-x)-g(1-x)+l(x)\left.\right| \\
\leq \left|F(1-x, x)+g(x)-\left(F(x, 1-x)+g(1-x)\right)\right| \\
+\left|g(1-x)-l(1-x)\right|+\left|l(x)-g(x)\right| \\
=\left|G(1-x, x)-G(x, 1-x)\right|\\
+\left|g(1-x)-l(1-x)\right|+\left|l(x)-g(x)\right| \\
\leq 3\varepsilon+6\varepsilon+6\varepsilon=15 \varepsilon.
\end{array}
\end{equation}
As in the first part of the proof define the functions $f_{0}$ and
$F_{0}$ on $]0, 1[$ and on $]0, 1[^{2}$, respectively, by
\[
f_{0}(x)=f(x)-l(1-x)
\]
and
\[
F_{0}(p, q)=f_{0}(p)+f_{0}(q)-f_{0}(pq)-f\left(\frac{1-p}{1-pq}\right).
\]
Due to (\ref{Eq2.23})
\begin{equation}\label{Eq2.24}
\left|f_{0}(x)-f_{0}(1-x)\right|\leq 15\varepsilon
\end{equation}
holds for all $x\in ]0, 1[$.
Furthermore, inequality (\ref{Eq2.1}) implies,
with the substitutions $x=1-p$, $y=pq$ ($p, q\in ]0, 1[$), that
\begin{equation}\label{Eq2.25}
\left|f_{0}(1-p)+f_{0}(q)-
f_{0}(pq)-f_{0}\left(\frac{1-p}{1-pq}\right)\right|\leq\varepsilon
\end{equation}
holds for all $p, q\in ]0, 1[$.
Inequalities (\ref{Eq2.24})  and (\ref{Eq2.25}) and the triangle inequality
imply that
\begin{equation}\label{Eq2.26}
\left|F_{0}(p, q)\right|\leq 16\varepsilon
\end{equation}
for all $p, q\in ]0, 1[$.
An easy calculation shows that
\[
f_{0}(p)-f_{0}(q)=
F_{0}(q, p)-F_{0}(p, q)+F_{0}\left(\frac{1-p}{1-pq}, p\right)+
f_{0}\left(1-\frac{1-p}{1-pq}\right)-f_{0}\left(\frac{1-p}{1-pq}\right)
\]
therefore,
\begin{equation}\label{Eq2.27}
\begin{array}{l}
\left|f_{0}(p)-f_{0}(q)\right|\\
\leq
\left|F_{0}(q, p)\right|+\left|F_{0}(p, q)\right|+
\left|F_{0}\left(\frac{1-p}{1-pq}, p\right)\right|+
\left|f_{0}\left(1-\frac{1-p}{1-pq}\right)-f_{0}\left(\frac{1-p}{1-pq}\right)\right| \\
\leq 3\cdot 16\varepsilon+15\varepsilon=63\varepsilon
\end{array}
\end{equation}
holds for all $p, q\in ]0, 1[$.
With the substitution $q=\frac{1}{2}$ (\ref{Eq2.27}) implies that
\[
\left|f_{0}(p)-f_{0}\left(\frac{1}{2}\right)\right| \leq 63\varepsilon.
\]
Using the definition of the function $f_{0}$, we obtain that
\[
\left|f(x)-l(1-x)-c\right|\leq 63\varepsilon
\]
holds for all $x\in]0, 1[$, where $c=f_{0}\left(\frac{1}{2}\right)$.
Hence inequality (\ref{Eq2.3}) holds, indeed.
\end{proof}

\begin{rem}
Applying Theorem \ref{Thm2.1} in the case $\varepsilon =0$
we get the general solution of (\ref{Eq1.1}) on $D^{\circ}$
(see also Maksa \cite{Mak82}).
\end{rem}

\section{Two corollaries of the main result}

The first corollary says that equation (\ref{Eq1.1}) is stable on $D$, as well.

\begin{thm}
Let $\alpha, \varepsilon\in\mathbb{R}$ be fixed, $\alpha\leq 0$, $\varepsilon\geq 0$.
Suppose that the function $f:[0,1]\rightarrow\mathbb{R}$ satisfies inequality
(\ref{Eq2.1}) for all $(x, y)\in D$.
Then, in case $\alpha<0$ there
exist $a, b\in\mathbb{R}$ such that the function $h_{1}$
defined on $[0, 1]$ by
\[
h_{1}(x)=\left\{
\begin{array}{lcl}
0, & \text{if} & x=0\\
ax^{\alpha}+b(1-x)^{\alpha}-b, &\text{if}& x\in \left.]0, 1[\right. \\
a-b, & \text{if} & x=1
\end{array}
\right.
\]
is a solution of (\ref{Eq1.1}) on $D$ and
\begin{equation}\label{Eq3.1}
\left|f(x)-h_{1}(x)\right|\leq 15\varepsilon, \quad \left(x\in [0, 1]\right)
\end{equation}
furthermore, in case $\alpha=0$, there exist $a, b, c\in\mathbb{R}$ such
that the function $h_{2}$ defined on $[0, 1]$ by
\[
h_{2}(x)=\left\{
\begin{array}{lcl}
a, & \text{if} & x=0\\
c, &\text{if}& x\in \left.]0, 1[\right. \\
b, & \text{if} & x=1
\end{array}
\right.
\]
is a solution of (\ref{Eq1.1}) on $D$ and
\begin{equation}\label{Eq3.2}
\left|f(x)-h_{2}(x)\right|\leq 63\varepsilon. 
\end{equation}
\end{thm}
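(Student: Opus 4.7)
The overall strategy is to apply Theorem \ref{Thm2.1}---which requires only the weaker hypothesis that \eqref{Eq2.1} hold on $D^\circ$---to obtain the parameters $a,b$ (or the logarithmic $l$ and the constant $c$) giving the desired approximation on $]0,1[$. The extra information provided by \eqref{Eq2.1} holding on the full domain $D$ will then serve both to pin down the endpoint values of $f$ and to confirm that the extended function $h_1$ (respectively $h_2$) is an exact solution of \eqref{Eq1.1} on $D$.

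For the case $\alpha<0$, I would first substitute $x=0$ in \eqref{Eq2.1}; after cancellation this reduces to $|f(0)(1-(1-y)^{\alpha})|\leq\varepsilon$ for all $y\in\,]0,1[$, and since $(1-y)^{\alpha}\to+\infty$ as $y\to 1^-$ (because $\alpha<0$), this forces $f(0)=0=h_1(0)$. Next I would substitute $y=1-x$ in \eqref{Eq2.1}, yielding
\[
|f(x)-f(1-x)-f(1)(x^{\alpha}-(1-x)^{\alpha})|\leq\varepsilon.
\]
Combined with the estimate of Theorem \ref{Thm2.1} applied to $x$ and to $1-x$---which gives $|f(x)-f(1-x)-(a-b)(x^{\alpha}-(1-x)^{\alpha})|\leq 30\varepsilon$---this produces
\[
|(a-b-f(1))(x^{\alpha}-(1-x)^{\alpha})|\leq 31\varepsilon,
\]
and letting $x\to 0^+$ (so that $x^{\alpha}$ blows up while $(1-x)^{\alpha}$ stays bounded) forces $f(1)=a-b=h_1(1)$. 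Hence \eqref{Eq3.1} holds with zero error at the endpoints and with error $15\varepsilon$ on $]0,1[$. That $h_1$ actually solves \eqref{Eq1.1} on $D$ follows on $D^{\circ}$ from the remark after Theorem \ref{Thm2.1}, and on the boundary regimes $x=0$, $y=0$, $x+y=1$ by direct substitution---for $x+y=1$ the key cancellation uses precisely $h_1(1)=a-b$.

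For the case $\alpha=0$ the substitution $x=0$ is vacuous since $(1-y)^0=1$, so the only useful boundary constraint is $y=1-x$, which collapses \eqref{Eq2.1} to $|f(x)-f(1-x)|\leq\varepsilon$. Combining this with Theorem \ref{Thm2.1} applied to $x$ and to $1-x$ yields $|l(x)-l(1-x)|\leq 127\varepsilon$; by the logarithmic identity this equals $|l(x/(1-x))|$, and since $x/(1-x)$ ranges over all of $]0,+\infty[$ as $x$ runs through $]0,1[$, the function $l$ is globally bounded. But $l(u^n)=n\,l(u)$, so any bounded logarithmic function is identically zero, whence $l\equiv 0$ and therefore $|f(x)-c|\leq 63\varepsilon$ on $]0,1[$. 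Setting $a=f(0)$ and $b=f(1)$ makes the endpoint bounds of \eqref{Eq3.2} trivial, and since $h_2$ is piecewise constant a direct substitution in each boundary regime shows that $h_2$ solves \eqref{Eq1.1} on $D$ for any choice of $a,b,c$. I expect this boundedness-to-vanishing step for $l$ to be the only substantive new ingredient of the proof; the rest is routine boundary bookkeeping made possible by the newly available values $f(0)$ and $f(1)$.
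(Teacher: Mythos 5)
Your proposal is correct and follows essentially the same route as the paper: $f(0)=0$ from the $x=0$ substitution, $f(1)=a-b$ from combining the $y=1-x$ substitution with the interior estimate applied to $x$ and $1-x$ (yielding the same $31\varepsilon$ bound), and in the $\alpha=0$ case the same $127\varepsilon$ bound on $l(x)-l(1-x)$ forcing the logarithmic function to vanish. The only difference is that you spell out the verification that $h_1$ and $h_2$ solve \eqref{Eq1.1} on the boundary of $D$, which the paper dismisses as an easy calculation.
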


\begin{proof}
An easy calculation shows that the functions $h_{1}$ and
$h_{2}$ are the solutions of (\ref{Eq1.1}) on $D$ in
case $\alpha<0$ and in case $\alpha=0$, respectively.
Firstly, we investigate the case $\alpha<0$.
Theorem \ref{Thm2.1}. implies that (\ref{Eq3.1}) holds for
all $x\in ]0, 1[$. Therefore, it is enough to prove that
(\ref{Eq3.1}) holds for $x=0$ and for $x=1$.
It follows from (\ref{Eq2.1}) with $x=0$, that
$\left((1-x)^{\alpha}-1\right)\left|f(0)\right|\leq \varepsilon$ if
$x\in ]0, 1[$. Since $\alpha<0$, $f(0)=0$ follows, that is,
(\ref{Eq3.1}) is valid for $x=0$.
Let now $x\in ]0, 1[$ and $y=1-x$ in (\ref{Eq2.1}).
Then
\begin{equation}\label{Eq3.3}
\left|f(1-x)-f(x)-f(1)\left((1-x)^{\alpha}-x^{\alpha}\right)\right|\leq \varepsilon.
\end{equation}
Apply (\ref{Eq2.2}) to $1-x$ instead of $x$.
Hence we get that
\begin{equation}\label{Eq3.4}
\left|-f(1-x)+a(1-x)^{\alpha}+bx^{\alpha}-b\right|\leq 15\varepsilon
\end{equation}
Adding the inequalities (\ref{Eq3.3}), (\ref{Eq2.2}) and (\ref{Eq3.4}) up
and using the triangle inequality to obtain that
\[
\left|a-b-f(x)\right|\cdot \left|(1-x)^{\alpha}-x^{\alpha}\right|\leq 31\varepsilon.
\quad \left(x\in ]0, 1[\right)
\]
Since $\alpha<0$ we get that $f(1)=a-b$ and so
(\ref{Eq3.1}) holds also for $x=1$.

Now, we fall to deal with the case $\alpha=0$.
Let $x\in ]0, 1[$ and $y=1-x$ in (\ref{Eq2.1}), then we obtain that
\begin{equation}\label{Eq3.5}
\left|f(x)-f(1-x)\right|\leq \varepsilon,  \quad \left(x\in ]0, 1[\right)
\end{equation}
furthermore, let us observe that (\ref{Eq2.1}) does not
pose any restriction on the value of $f(0)$ as well as $f(1)$.
Thus $f(0)=a$, $f(1)=b$, where $a, b\in\mathbb{R}$ are certain constants.
Therefore (\ref{Eq3.2}) holds if $x=0$ or $x=1$.

In case $x\in ]0, 1[$, due to Theorem \ref{Thm2.1}. there
exist a logarithmic function $l:]0, 1[\rightarrow\mathbb{R}$ and
$c\in\mathbb{R}$ such that
\begin{equation}\label{Eq3.6}
\left|f(x)-l(1-x)-c\right|\leq 63\varepsilon
\end{equation}
holds for all $x\in ]0, 1[$. Hence it is enough to prove that the
function $l$ is identically zero on $]0, 1[$.
Indeed, due to (\ref{Eq2.3}), (\ref{Eq3.5}) and (\ref{Eq3.6})
\begin{equation}\label{Eq3.7}
\begin{array}{l}
\left|l(1-x)-l(x)\right| \\
=\left|l(1-x)-f(1-x)+f(1-x)+c\right.\\
-l(x)+f(x)-f(x)-c\left.\right| \\
\leq
\left|l(1-x)+c-f(x)\right|+
\left|f(1-x)-l(x)-c\right|\\
+\left|f(x)-f(1-x)\right| \\
\leq 127 \varepsilon
\end{array}
\end{equation}
holds for all $x\in ]0, 1[$,
using that the function $l$ is logarithmic, the last inequality can be written
as
\[
\left|l\left(\frac{1-x}{x}\right)\right|\leq 127\varepsilon .
\quad \left(x\in ]0, 1[\right)
\]
By substitution $x=\frac{1}{1+p}$ ($p\in ]0, +\infty[$)
into the last inequality, we obtain that
\[
\left|l(p)\right|\leq 127\varepsilon
\]
holds for all $p\in ]0, +\infty[$, where we used the fact that
every logarithmic function on $]0, 1[$ is uniquely extendable to a
logarithmic function on $]0, +\infty[$.

Thus
$l$ is bounded on $]0, +\infty[$.
However, the only bounded, logarithmic function on
$]0, +\infty[$ is the identically zero function.
Therefore,
\[
\left|f(x)-c\right|\leq 63\varepsilon
\]
holds for all $x\in ]0, 1[$, i.e., (\ref{Eq3.2}) is proved.
\end{proof}

The second corollary concerns the stability of a system of
equations.

\begin{thm}\label{Thm3.2}
Let $n\geq 2$ be a fixed positive integer,
\[
\Gamma^{\circ}_{n}=\left\{(p_{1}, \ldots, p_{n})
\vert p_{i}>0, i=1, \ldots, n, \sum^{n}_{i=1}p_{i}=1\right\}
\]
and $(I_{n})$ be the sequence of functions
$I_{n}:\Gamma^{\circ}_{n}\rightarrow\mathbb{R}$
and suppose that there exist a sequence $(\varepsilon_{n})$
of nonnegative real numbers and a real number $\alpha<0$
such that
\begin{equation}\label{Eq3.8}
\left|I_{n}(p_{1}, \ldots, p_{n})-
I_{n-1}(p_{1}+p_{2}, p_{3}, \ldots, p_{n})-
(p_{1}+p_{2})^{\alpha}I_{2}\left(\frac{p_{1}}{p_{1}+p_{2}}, \frac{p_{2}}{p_{1}+p_{2}}\right)\right| \leq
\varepsilon_{n-1}
\end{equation}
for all $n\geq 3$ and $(p_{1}, \ldots, p_{n})\in\Gamma^{\circ}_{n}$, and
\begin{equation}\label{Eq3.9}
\left|I_{3}(p_{1}, p_{2}, p_{3})-I_{3}(p_{1}, p_{3}, p_{2})\right|\leq \varepsilon_{1}
\end{equation}
holds on $\Gamma^{\circ}_{n}$.
Then, in case $\alpha<0$ there exist $c, d\in\mathbb{R}$ such that
\begin{equation}\label{Eq3.10}
\begin{array}{l}
\left|I_{n}\left(p_{1}, \ldots, p_{n}\right)-
\left(c H^{\alpha}_{n}\left(p_{1}, \ldots, p_{n}\right)+d\left(p_{1}^{\alpha}-1\right)\right)\right| \\
\leq \sum^{n-1}_{k=2}\varepsilon_{k}+15\left(2\varepsilon_{2}+\varepsilon_{1}\right)
\left(1+\sum^{n-1}_{k=2}\left(\sum^{k}_{i=1}p_{i}^{\alpha}\right)\right)
\end{array}
\end{equation}
for all $n\geq2$ and $\left(p_{1}, \ldots, p_{n}\right)\in\Gamma^{\circ}_{n}$.
Furthermore, in case $\alpha=0$ there exists a logarithmic function
$l:]0, 1[\rightarrow\mathbb{R}$ and $c\in\mathbb{R}$ such that
\begin{equation}\label{Eq3.11}
\begin{array}{l}
\left|I_{n}\left(p_{1}, \ldots, p_{n}\right)-\left(cH^{0}_{n}\left(p_{1}, \ldots, p_{n}\right)+l(p_{1})\right)\right|\\
\sum^{n-1}_{k=2}\varepsilon_{k}+63\left(n-1\right)\left(2\varepsilon_{2}+\varepsilon_{1}\right)
\end{array}
\end{equation}
for all $n\geq 2$ and $\left(p_{1}, \ldots, p_{n}\right)\in\Gamma^{\circ}_{n}$, where the
convention $\sum^{1}_{k=2}\varepsilon_{k}=\sum^{1}_{k=2}\left(\sum^{k}_{i=1}p_{i}^{\alpha}\right)=0$ is
adapted and
\begin{equation}\label{Eq3.12}
H^{\alpha}_{n}\left(p_{1}, \ldots, p_{n}\right)=
\left(2^{1-\alpha}-1\right)^{-1}\left(\sum^{n}_{i=1}p_{i}^{\alpha}-1\right).
\quad \left(\left(p_{1}, \ldots, p_{n}\right)\in\Gamma^{\circ}_{n}\right)
\end{equation}
\end{thm}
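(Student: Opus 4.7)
My plan is to reduce the stability of the system to the stability of the single equation established in Theorem~\ref{Thm2.1}, applied to $I_{2}$, and then to propagate the resulting approximation from $I_{2}$ to every $I_{n}$ by induction on $n$ using the approximate recursivity (\ref{Eq3.8}). The first step extracts an approximate parametric fundamental equation for $I_{2}$: applying (\ref{Eq3.8}) at $(p_{1},p_{2},p_{3})$ and at $(p_{1},p_{3},p_{2})$, then combining the two resulting bounds with the approximate semi-symmetry (\ref{Eq3.9}) via the triangle inequality, eliminates the $I_{3}$ contributions and leaves
\[
\bigl|(p_1+p_2)^\alpha I_2\bigl(\tfrac{p_1}{p_1+p_2},\tfrac{p_2}{p_1+p_2}\bigr)+I_2(p_1+p_2,p_3)-(p_1+p_3)^\alpha I_2\bigl(\tfrac{p_1}{p_1+p_3},\tfrac{p_3}{p_1+p_3}\bigr)-I_2(p_1+p_3,p_2)\bigr|\le 2\varepsilon_{2}+\varepsilon_{1}.
\]
Setting $f(x):=I_{2}(1-x,x)$ and substituting $x=p_{3}$, $y=p_{2}$ (so that $p_{1}=1-x-y$ and $(x,y)$ runs over $D^{\circ}$) converts this into inequality (\ref{Eq2.1}) for $f:\,]0,1[\to\mathbb{R}$ with $2\varepsilon_{2}+\varepsilon_{1}$ in place of $\varepsilon$.

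Theorem~\ref{Thm2.1} then supplies the base case $n=2$. For $\alpha<0$ it gives $a,b\in\mathbb{R}$ with $|f(x)-(ax^\alpha+b(1-x)^\alpha-b)|\le 15(2\varepsilon_{2}+\varepsilon_{1})$; the linear reparametrisation $c:=a(2^{1-\alpha}-1)$, $d:=b-a$ rewrites this, via (\ref{Eq3.12}), as $|I_{2}(p,q)-(cH_{2}^\alpha(p,q)+d(p^\alpha-1))|\le 15(2\varepsilon_{2}+\varepsilon_{1})$. For $\alpha=0$ the theorem analogously delivers a logarithmic $l$ and $c\in\mathbb{R}$ with $|I_{2}(p,q)-(cH_{2}^{0}(p,q)+l(p))|\le 63(2\varepsilon_{2}+\varepsilon_{1})$, using that $H_{2}^{0}=1$. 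I then define the candidate approximant on $\Gamma_{n}^\circ$ by $A_{n}:=cH_{n}^\alpha+d(p_{1}^\alpha-1)$ (respectively $A_{n}:=cH_{n}^{0}+l(p_{1})$), and verify by a short direct calculation from (\ref{Eq3.12}), together with the Cauchy equation for $l$ when $\alpha=0$, the \emph{exact} recursion
\[
A_{n}(p_{1},\ldots,p_{n})=A_{n-1}(p_{1}+p_{2},p_{3},\ldots,p_{n})+(p_{1}+p_{2})^\alpha A_{2}\bigl(\tfrac{p_{1}}{p_{1}+p_{2}},\tfrac{p_{2}}{p_{1}+p_{2}}\bigr).
\]

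With this identity in hand, the inductive step is a three-term triangle-inequality decomposition of $I_{n}-A_{n}$: the defect of (\ref{Eq3.8}) (at most $\varepsilon_{n-1}$), the inductive bound on $I_{n-1}-A_{n-1}$ evaluated at $(p_{1}+p_{2},p_{3},\ldots,p_{n})$, and $(p_{1}+p_{2})^\alpha$ times the bound on $I_{2}-A_{2}$ from the previous paragraph. Unwinding the induction accumulates $\sum_{k=2}^{n-1}\varepsilon_{k}$ together with, at the $k$-th level, the factor $(p_{1}+\cdots+p_{k})^\alpha$ multiplying $15(2\varepsilon_{2}+\varepsilon_{1})$ (respectively $63(2\varepsilon_{2}+\varepsilon_{1})$). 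To match (\ref{Eq3.10}) I then use the estimate $(p_{1}+\cdots+p_{k})^\alpha\le\sum_{i=1}^{k}p_{i}^\alpha$, valid for $\alpha\le 0$ because $p_{i}\le p_{1}+\cdots+p_{k}$ forces $p_{i}^\alpha\ge(p_{1}+\cdots+p_{k})^\alpha$ for each $i\le k$; for $\alpha=0$ each such factor equals $1$, producing the linear accumulation $(n-1)\cdot 63(2\varepsilon_{2}+\varepsilon_{1})$ in (\ref{Eq3.11}). The main obstacle I anticipate is the exact verification of the closed-form recursion for $A_{n}$ (which rests on the algebraic identity $p_{1}^\alpha+p_{2}^\alpha-(p_{1}+p_{2})^\alpha=(p_{1}+p_{2})^\alpha[(p_{1}/(p_{1}+p_{2}))^\alpha+(p_{2}/(p_{1}+p_{2}))^\alpha-1]$ combined with the logarithmic property when $\alpha=0$); once that identity is established, the remainder is essentially quantitative triangle-inequality bookkeeping.
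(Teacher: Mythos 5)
Your proposal is correct and follows essentially the same route as the paper: reduce to inequality (\ref{Eq2.1}) for $f(x)=I_{2}(1-x,x)$ with $\varepsilon=2\varepsilon_{2}+\varepsilon_{1}$, invoke Theorem \ref{Thm2.1} for the base case $n=2$ with the reparametrisation $c=a(2^{1-\alpha}-1)$, $d=b-a$, observe that the closed-form approximant satisfies the recursion exactly, and then induct via the same three-term triangle-inequality decomposition. If anything, you are more explicit than the paper in two spots it glosses over: the elimination of $I_{3}$ to obtain (\ref{Eq2.1}) (which the paper delegates to Maksa \cite{Mak08}) and the passage from the naturally accumulating factors $(p_{1}+\cdots+p_{k})^{\alpha}$ to the $\sum_{i=1}^{k}p_{i}^{\alpha}$ appearing in (\ref{Eq3.10}), which your inequality for $\alpha\leq 0$ justifies cleanly.
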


\begin{proof}
As in Maksa \cite{Mak08}, it can be proved that,
due to (\ref{Eq3.8}) and (\ref{Eq3.9}),
for the function $f$ defined on $]0, 1[$ by
$f(x)=I_{2}(1-x, x)$ we get that
\[
\left|f(x)+(1-x)^{\alpha}f\left(\frac{y}{1-x}\right)
-f(y)-(1-y)^{\alpha}f\left(\frac{x}{1-y}\right)\right|\leq 2\varepsilon_{2}+\varepsilon_{1}
\]
for all $(x, y)\in D^{\circ}$, i.e., (\ref{Eq2.1}) holds with
$\varepsilon=2\varepsilon_{2}+\varepsilon_{1}$.
Therefore, applying Theorem \ref{Thm2.1}. we obtain
(\ref{Eq2.2}) and (\ref{Eq2.3}), respectively, with some
$a, b, c\in\mathbb{R}$ and a logarithmic function $l:]0, 1[\rightarrow\mathbb{R}$
and
$\varepsilon=2\varepsilon_{2}+\varepsilon_{1}$, i.e.,

\[
\left|I_{2}\left(1-x, x\right)-\left(ax^{\alpha}+b(1-x)^{\alpha}-b\right)\right|\leq 15(2\varepsilon_{2}+\varepsilon_{1}),
\quad \left(x\in ]0, 1[\right)
\]
in case $\alpha<0$, and
\[
\left|I_{2}\left(1-x, x\right)-\left(l(1-x)+c\right)\right|\leq 63(2\varepsilon_{2}+\varepsilon_{1})
\quad \left(x\in ]0, 1[\right)
\]
in case $\alpha=0$.

Therefore (\ref{Eq3.10}) holds  with $c=(2^{1-\alpha}-1)a$, $d=b-a$ in case $\alpha<0$, and
(\ref{Eq3.11}) holds in case  $\alpha=0$, respectively,  for $n=2$.

We continue the proof by induction on $n$.
Suppose that (\ref{Eq3.10}) and (\ref{Eq3.11}) holds, resp., and for the
sake of brevity, introduce the notation
\[
J_{n}(p_{1}, \ldots, p_{n})=\left\{
\begin{array}{lcl}
cH^{\alpha}_{n}(p_{1}, \ldots, p_{n}),&\text{if}& \alpha<0 \\
cH^{0}_{n}(p_{1}, \ldots, p_{n})+l(p_{1}),&\text{if}& \alpha=0
\end{array}
\right.
\]
for all $n\geq 2$, $(p_{1}, \ldots, p_{n})\in\Gamma^{\circ}_{n}$.
It can easily be seen that (\ref{Eq3.10}) and (\ref{Eq3.11}) hold
on $\Gamma^{\circ}_{n}$ for $J_{n}$ instead of $I_{n}$ ($n\geq 3$)
with $\varepsilon_{n}=0$ ($n\geq 2$).
Thus, for all $(p_{1}, \ldots, p_{n+1})\in\Gamma^{\circ}_{n+1}$, we get that
\[
\begin{array}{l}
I_{n+1}(p_{1}, \ldots, p_{n+1})-J_{n+1}(p_{1}, \ldots, p_{n+1}) \\
=I_{n+1}(p_{1}, \ldots, p_{n+1})-
J_{n}(p_{1}+p_{2}, p_{3}, \ldots, p_{n+1})-
(p_{1}+p_{2})^{\alpha}J_{2}\left(\frac{p_{1}}{p_{1}+p_{2}}, \frac{p_{2}}{p_{1}+p_{2}}\right) \\
=I_{n+1}(p_{1}, \ldots, p_{n+1})-I_{n}(p_{1}+p_{2}, p_{3}, \ldots, p_{n+1})-
(p_{1}+p_{2})^{\alpha}I_{2}\left(\frac{p_{1}}{p_{1}+p_{2}}, \frac{p_{2}}{p_{1}+p_{2}}\right) \\
+I_{n}(p_{1}+p_{2},p_{3}, \ldots, p_{n+1})-J_{n}(p_{1}+p_{2},p_{3}, \ldots, p_{n+1}) \\
+ (p_{1}+p_{2})^{\alpha}\left(I_{2}\left(\frac{p_{1}}{p_{1}+p_{2}}\right)-
J_{2}\left(\frac{p_{1}}{p_{1}+p_{2}}, \frac{p_{2}}{p_{1}+p_{2}}\right)\right).
\end{array}
\]
Therefore, if $\alpha<0$,  (\ref{Eq3.8}) (with $n+1$ instead of $n$),
(\ref{Eq3.10}) with $n=2$ and the induction hypothesis
(applying to $(p_{1}+p_{2}, \ldots, p_{n+1})$ instead of $(p_{1}, \ldots, p_{n})$)
imply that
\[
\begin{array}{l}
\left|I_{n+1}(p_{1}, \ldots, p_{n+1})-J_{n+1}(p_{1}, \ldots, p_{n+1})\right| \\
\leq \varepsilon_{n}+\sum^{n-1}_{k=2}\varepsilon_{k}+
15(2\varepsilon_{2}+\varepsilon_{1})\left(1+\sum^{n-1}_{k=2}\left(\sum^{k+1}_{i=1}p_{i}\right)^{\alpha}\right) \\
+15(2\varepsilon_{2}+\varepsilon_{1})(p_{1}+p_{2})^{\alpha} \\
= \sum^{n}_{k=2}\varepsilon_{k}+15(2\varepsilon_{2}+\varepsilon_{1})
\left(1+\sum^{n}_{k=2}\left(\sum^{k}_{i=1}p_{i}\right)^{\alpha}\right),
\end{array}
\]
that is (\ref{Eq3.10}) holds for $n+1$ instead of $n$.

Finally, if $\alpha=0$, (\ref{Eq3.9}) (with $n+1$ instead of $n$),
(\ref{Eq3.11}) with $n=2$ and the induction hypothesis
(applying to $(p_{1}+p_{2}, \ldots, p_{n+1})$ instead of $(p_{1}, \ldots, p_{n})$)
imply that
\[
\begin{array}{l}
\left|I_{n+1}(p_{1}, \ldots, p_{n+1})-J_{n+1}(p_{1}, \ldots, p_{n+1})\right| \\
\leq \varepsilon_{n}+\sum^{n-1}_{k=2}\varepsilon_{k}+63(n-1)(2\varepsilon_{2}+\varepsilon_{1})+
63(2\varepsilon_{2}+\varepsilon_{1}) \\
=\sum^{n}_{k=2}\varepsilon_{k}+63n(2\varepsilon_{2}+\varepsilon_{1}),
\end{array}
\]
this yields that (\ref{Eq3.11}) holds for $n+1$ instead of $n$.
\end{proof}

\begin{rem}
Applying Theorem \ref{Thm3.2} with the choice
$\varepsilon_{n}=0$ for all $n\in\mathbb{N}$ we get
the $\alpha$--recursive, $3$--semisymmetric information measures.
\end{rem}

\begin{rem}
The sequence $H^{\alpha}_{n}$ defined in (\ref{Eq3.12}) is the entropy
of degree $\alpha$ (see Acz\'{e}l--Dar\'{o}czy \cite{AD75},
Havrda--Charv\'{a}t \cite{HC67}, Dar\'{o}czy \cite{Dar70},
Tsallis \cite{Tsa88}) for $\alpha\leq 0$.
Since the sequence $J_{n}$ satisfies (\ref{Eq3.8}) and (\ref{Eq3.9})
with $\varepsilon_{n}=0$ ($n\geq1$), our theorem says that
$(J_{n})$ is stable in a certain sense.
\end{rem}

\end{document}